\documentclass[11pt]{article}

\usepackage[margin=1in]{geometry}
\usepackage{amsmath, amssymb, amsthm}
\usepackage{algorithm, algpseudocode}
\makeatletter
\algnewcommand\LState[1]{%
  \State \parbox[t]{\dimexpr\linewidth-\ALG@thistlm\relax}%
         {\raggedright #1\strut}}
\makeatother
\usepackage{booktabs}
\usepackage{hyperref}
\usepackage{xcolor}
\usepackage{listings}

\newtheorem{theorem}{Theorem}
\newtheorem{proposition}[theorem]{Proposition}
\newtheorem{corollary}[theorem]{Corollary}

\title{Two-Thread Coverage MCTS for SAT and XSAT}
\author{Marcel Crasmaru\\\small\texttt{crasmarum@gmail.com}}
\date{\today}

\begin{document}
\maketitle

\begin{abstract}
We introduce a Monte-Carlo Tree Search solver for SAT and XSAT that
pairs WalkSAT-style rollouts with a two-thread symmetry-breaking
initialisation: one thread starts from all-true, the other from
all-false, bounding each thread's initial Hamming distance to a
satisfying assignment by $\lfloor n/2 \rfloor$. Empirically the
solver closes 100/100 SATLIB graph-colouring encodings
(\texttt{flat200-479}, \texttt{sw100}) in tens of milliseconds each,
20/20 planted 3-XOR-SAT at $n{=}200$ (median 10~s), 20/20 at
$n{=}300$ (median 90~s), and one SAT Competition 2025 instance
(\texttt{Break\_triple\_04\_06.xml.cnf}) in 50~ms via the polarity
split alone. On a full 16-round DES key-recovery encoding ($n{=}1976$,
$m{=}30\,072$) it drives the negative-clause count from $\sim 200$
down to 24 (99.9\% clauses satisfied) over 7 hours before hitting the
S-box plateau.
\end{abstract}

\section{Introduction}
\label{sec:intro}

Local search solvers based on WalkSAT~\cite{selman:walksat} solve
uniform-random 3-SAT far beyond the reach of complete methods but
scale poorly on structured or XOR-heavy formulas. MCTS as popularised
by AlphaGo~\cite{silver:alphago} orchestrates exploration versus
exploitation through UCT selection~\cite{kocsis:uct}. We combine the
two: MCTS drives diversification via a persistent tree with
stall-triggered restarts, while WalkSAT-style rollouts do the
intensification work at each leaf.

Two design choices distinguish the solver from prior local-search /
MCTS hybrids. First, the search is run in parallel from two
complementary starting assignments; we prove in
Section~\ref{sec:coverage} that this guarantees a satisfying
assignment reachable within $\lfloor n/2 \rfloor$ flips of one of them.
Second, XOR clauses are treated natively via a polymorphic negativity
predicate, and unit propagation is applied at three points — an
initial pass over the loaded formula (preprocess), the first evaluation
of each newly-expanded tree leaf's move (via a forced-flip cascade),
and every step inside every rollout. Note that the UCT descent phase
itself does \emph{not} re-run propagation: existing tree nodes replay
their memoised flip chains rather than recomputing them. State is
mutated \emph{incrementally} during descent — each node applies only
its own memoised flips, not the whole chain from root, so descending
a path totalling $k$ variable flips costs $\Theta(k)$ rather than
$\Theta(k^2)$. Correspondingly,
nodes store their per-move flip list ($O(\text{flips at that node})$),
not a snapshot of the whole assignment — a time-vs-memory trade in
favour of memory since $k$ can reach $n/2$.
Section~\ref{sec:algorithm} gives the algorithm.
Section~\ref{sec:results} reports empirical results across five
problem families plus the SAT 2025 and DES anecdotes.

\section{Preliminaries}
\label{sec:preliminaries}

\subsection{Formulas and clauses}

Let $V = \{x_1,\dots,x_n\}$ be a set of Boolean variables. A
\emph{literal} $\ell$ is either $x_i$ (positive) or $\neg x_i$
(negative). Under an assignment $\sigma \in \{0,1\}^n$, the value of
$\ell$ is $\sigma_i$ if $\ell = x_i$ and $1-\sigma_i$ if
$\ell = \neg x_i$.

We consider two kinds of clauses over literals $\ell_1,\dots,\ell_k$:
\begin{align}
\text{OR clause:}  \quad & C^\vee_{\ell_1,\dots,\ell_k}(\sigma)
    \;=\; \ell_1 \vee \ell_2 \vee \cdots \vee \ell_k, \\
\text{XOR clause:} \quad & C^\oplus_{\ell_1,\dots,\ell_k}(\sigma)
    \;=\; \ell_1 \oplus \ell_2 \oplus \cdots \oplus \ell_k.
\end{align}
A formula $F$ is a conjunction of clauses; $\sigma$ \emph{satisfies}
$F$ iff every clause is $1$ under $\sigma$.

We call a clause \emph{negative} under $\sigma$ when it evaluates to
$0$. Writing $|\ell|_\sigma \in \{0,1\}$ for the value of $\ell$ under
$\sigma$:
\begin{align}
C^\vee \text{ is negative under } \sigma \;\iff\;
    & |\ell_j|_\sigma = 0 \text{ for every } j,
\label{eq:orNeg}\\
C^\oplus \text{ is negative under } \sigma \;\iff\;
    & \bigoplus_{j=1}^{k} |\ell_j|_\sigma = 0.
\label{eq:xorNeg}
\end{align}

Let $\mathrm{neg}(F,\sigma)$ denote the number of clauses of $F$ that
are negative under $\sigma$.

\begin{proposition}[Satisfiability via negative-clause count]
\label{prop:satIffZero}
$F$ is satisfied by $\sigma$ iff $\mathrm{neg}(F,\sigma) = 0$.
\end{proposition}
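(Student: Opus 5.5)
The plan is to unfold the definitions. By definition, $\sigma$ satisfies $F$ iff every clause of $F$ evaluates to $1$ under $\sigma$. The key step is to check that, for each clause type, ``negative under $\sigma$'' is exactly the complement of ``evaluates to $1$ under $\sigma$''.

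First I will verify this for OR clauses. The clause $C^\vee$ evaluates to $0$ precisely when all its literals take value $0$, which is the characterisation~\eqref{eq:orNeg}.

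Next I will verify it for XOR clauses. The value of $C^\oplus$ under $\sigma$ is $\bigoplus_j |\ell_j|_\sigma$, and~\eqref{eq:xorNeg} declares the clause negative exactly when this parity is $0$. Since each clause value lies in $\{0,1\}$, a clause is negative iff it is not $1$.

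Then I will count. By definition, $\mathrm{neg}(F,\sigma)$ is the cardinality of the set of clauses of $F$ that are negative under $\sigma$. A sum of nonnegative indicator terms vanishes iff every term vanishes. So $\mathrm{neg}(F,\sigma)=0$ iff no clause is negative, which by the previous two steps holds iff every clause evaluates to $1$, i.e.\ iff $\sigma$ satisfies $F$.

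The only step needing care is the XOR case. Here I must make sure the paper's convention is the one written: the clause asserts odd parity, and ``negative'' is parity $0$. Consistency with the semantics of $C^\oplus$ is immediate from its definition as the XOR of its literals. The remaining steps are routine.
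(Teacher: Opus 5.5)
Your proposal is correct and takes essentially the same route as the paper's proof: $\sigma$ satisfies $F$ iff every clause evaluates to $1$, iff no clause is negative, iff the count $\mathrm{neg}(F,\sigma)$ is $0$. Your separate OR/XOR checks are harmless but not needed, since the paper defines ``negative'' directly as ``evaluates to $0$'' and \eqref{eq:orNeg}--\eqref{eq:xorNeg} follow from that definition.
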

\begin{proof}
Direct from the definitions. $F$ is a conjunction of clauses, so
$F(\sigma) = 1$ iff every clause evaluates to $1$ iff no clause is
negative.
\end{proof}

Proposition~\ref{prop:satIffZero} anchors the whole algorithm: the
search's objective is to drive $\mathrm{neg}(F,\sigma)$ to zero by
picking a suitable $\sigma$.

\subsection{Variable flipping}

The search moves in the assignment space $\{0,1\}^n$ by \emph{flipping
one variable at a time}: from $\sigma$ to $\sigma \oplus e_i$, where
$e_i$ has a $1$ in position $i$. A \emph{flipping path} of length $k$
is a sequence
$\sigma^{(0)} \to \sigma^{(1)} \to \cdots \to \sigma^{(k)}$ where
consecutive assignments differ in exactly one coordinate. The Hamming
distance $d_H(\sigma^{(0)}, \sigma^{(k)})$ equals the number of
variables flipped an odd number of times along the path.

Implementation-wise, each variable owns a list of the clauses it
appears in, and flipping $x_i$ walks that list and toggles the literal
sign in each clause. A per-clause \emph{negatives} counter maintains
$|\{j : |\ell_j|_\sigma = 0\}|$ incrementally, so
\eqref{eq:orNeg}--\eqref{eq:xorNeg} become $O(1)$ tests and
$\mathrm{neg}(F,\sigma)$ is maintained via a bitset of negative-clause
indices. The cost of a flip is $O(k \cdot d)$ where $d$ is the average
degree of a variable in the clause list ($d \approx 3$ for random 3-SAT).

\paragraph{Worked example: a three-flip walk to SAT.}
Take $n = 3$ and the OR-formula
\begin{align*}
F \;=\; & (\neg x_1 \vee x_2 \vee x_3)
   \wedge (x_1 \vee \neg x_2 \vee x_3)
   \wedge (x_1 \vee x_2 \vee \neg x_3) \\
   & \wedge (\neg x_1 \vee \neg x_2 \vee x_3)
   \wedge (\neg x_1 \vee x_2 \vee \neg x_3)
   \wedge (x_1 \vee \neg x_2 \vee \neg x_3) \\
   & \wedge (\neg x_1 \vee \neg x_2 \vee \neg x_3).
\end{align*}
$F$ is the conjunction of all seven three-literal clauses over
$\{x_1, x_2, x_3\}$ \emph{except} $(x_1 \vee x_2 \vee x_3)$. Every
assignment $\tau \in \{0,1\}^3$ falsifies exactly one three-literal
clause — the clause whose literals are all false under $\tau$ — so $F$
is satisfied by a unique $\tau$: the assignment $(0,0,0)$, the one that
would falsify the missing clause $(x_1 \vee x_2 \vee x_3)$.

Start from $\sigma^{(0)} = (1,1,1)$. Only $(\neg x_1 \vee \neg x_2 \vee
\neg x_3)$ has all its literals false, so $\mathrm{neg}(F,
\sigma^{(0)}) = 1$. Applying the flip sequence $x_2, x_1, x_3$:

\begin{center}
\begin{tabular}{llcl}
\toprule
Step & $\sigma$ & $\mathrm{neg}$ & Currently-negative clause \\
\midrule
$\sigma^{(0)}$ & $(1,1,1)$ & 1 & $\neg x_1 \vee \neg x_2 \vee \neg x_3$ \\
$\sigma^{(1)} \;=\; \sigma^{(0)} \oplus e_2$ & $(1,0,1)$ & 1 & $\neg x_1 \vee x_2 \vee \neg x_3$ \\
$\sigma^{(2)} \;=\; \sigma^{(1)} \oplus e_1$ & $(0,0,1)$ & 1 & $x_1 \vee x_2 \vee \neg x_3$ \\
$\sigma^{(3)} \;=\; \sigma^{(2)} \oplus e_3$ & $(0,0,0)$ & 0 & --- (SAT) \\
\bottomrule
\end{tabular}
\end{center}

Each flip satisfies the currently-negative clause (by making one of
its false literals true) but may create a new negative clause
elsewhere; after three flips the walk lands on the unique satisfying
assignment $\sigma^{(3)} = (0,0,0)$ with $\mathrm{neg}(F,
\sigma^{(3)}) = 0$. The Hamming distance
$d_H(\sigma^{(0)}, \sigma^{(3)}) = 3 = n$ is the maximum possible, so
a single-thread search from $\sigma^{(0)} = \mathbf{1}$ is at the
worst case for its ball. Under
Theorem~\ref{thm:coverage}, the All-False start $\bar\sigma^{(0)} =
\mathbf{0}$ instead lies at Hamming distance $0$ from the solution
and reaches SAT with no flips at all — the symmetry-breaking payoff
in its cleanest possible form.

\section{Two-thread coverage}
\label{sec:coverage}

Given any $\sigma \in \{0,1\}^n$, let $\bar\sigma$ denote its bitwise
complement.

\begin{theorem}[Coverage]
\label{thm:coverage}
If $F$ is satisfiable, then there exists a satisfying assignment
$\tau$ of $F$ with
\[
\min\bigl( d_H(\sigma, \tau),\, d_H(\bar\sigma, \tau) \bigr)
\;\le\; \left\lfloor \frac{n}{2} \right\rfloor.
\]
\end{theorem}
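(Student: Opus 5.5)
The argument is a simple counting identity. Since $F$ is satisfiable, fix any satisfying assignment $\tau$; I will show this very $\tau$ works, whatever $\sigma$ is.

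The key observation is that complementation turns agreement into disagreement coordinatewise. For each $i \in \{1,\dots,n\}$, exactly one of $\sigma_i \neq \tau_i$ and $\bar\sigma_i \neq \tau_i$ holds, because $\bar\sigma_i = 1-\sigma_i$ and $\tau_i \in \{0,1\}$. The set of coordinates where $\sigma$ differs from $\tau$ and the set where $\bar\sigma$ differs from $\tau$ therefore partition $\{1,\dots,n\}$. This gives
\[
d_H(\sigma,\tau) + d_H(\bar\sigma,\tau) \;=\; n .
\]

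Next I bound the minimum. The minimum of two nonnegative integers is at most their average, so
\[
\min\bigl(d_H(\sigma,\tau),\, d_H(\bar\sigma,\tau)\bigr) \;\le\; \frac{n}{2} .
\]
The left-hand side is an integer, so it is at most $\lfloor n/2 \rfloor$, which proves the theorem.

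No step here is a real obstacle. The only points to watch are that the integrality rounding is what turns $n/2$ into $\lfloor n/2 \rfloor$, and that the existential quantifier over $\tau$ is trivially met, since every satisfying assignment works. As a remark, applying this with $\sigma = \mathbf{1}$ and $\bar\sigma = \mathbf{0}$ gives the two-thread guarantee used by the solver. In the worked example, $\tau=(0,0,0)$ has $d_H(\mathbf{1},\tau)=3$ and $d_H(\mathbf{0},\tau)=0$, which sum to $n=3$ as the identity predicts.
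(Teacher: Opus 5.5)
Your proof is correct and follows essentially the same route as the paper: fix any satisfying $\tau$, use coordinatewise complementarity to get $d_H(\sigma,\tau)+d_H(\bar\sigma,\tau)=n$, and conclude that the minimum is at most $n/2$. You also state explicitly the integrality step that turns $n/2$ into $\lfloor n/2\rfloor$, which the paper leaves implicit.
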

\begin{proof}
Let $\tau$ be any satisfying assignment (nonempty by hypothesis). For
each coordinate $i$, exactly one of $\sigma_i, \bar\sigma_i$ agrees
with $\tau_i$ (since $\bar\sigma_i = 1-\sigma_i$). Hence
\[
d_H(\sigma, \tau) + d_H(\bar\sigma, \tau)
   \;=\; \sum_i [\sigma_i \ne \tau_i] + \sum_i [\bar\sigma_i \ne \tau_i]
   \;=\; \sum_i 1 \;=\; n.
\]
The two summands cannot both exceed $n/2$, so
$\min \le \lfloor n/2 \rfloor$.
\end{proof}

\begin{corollary}[Two-thread coverage]
\label{cor:twoThread}
Two search processes, one starting from $\sigma$ and one from
$\bar\sigma$, each capable of exploring assignments within Hamming
distance $\lfloor n/2 \rfloor$ of its start, jointly explore a set
containing at least one satisfying assignment of any satisfiable $F$.
\end{corollary}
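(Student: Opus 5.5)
The corollary follows directly from Theorem~\ref{thm:coverage}; the only work is to make precise what ``capable of exploring assignments within Hamming distance $\lfloor n/2 \rfloor$'' means and then match it to the theorem's conclusion. For a start point $\rho$, write $B(\rho) = \{\tau \in \{0,1\}^n : d_H(\rho,\tau) \le \lfloor n/2 \rfloor\}$ for the Hamming ball of radius $\lfloor n/2\rfloor$ around $\rho$. The hypothesis on the two search processes is that the set of assignments explored by the first process contains $B(\sigma)$, and the set explored by the second contains $B(\bar\sigma)$. The joint explored set therefore contains $B(\sigma) \cup B(\bar\sigma)$.

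Now let $F$ be satisfiable. Theorem~\ref{thm:coverage} gives a satisfying assignment $\tau$ with $\min(d_H(\sigma,\tau), d_H(\bar\sigma,\tau)) \le \lfloor n/2\rfloor$. Split into two cases according to which term attains the minimum. If $d_H(\sigma,\tau) \le \lfloor n/2 \rfloor$, then $\tau \in B(\sigma)$. Otherwise $d_H(\bar\sigma,\tau) \le \lfloor n/2\rfloor$, so $\tau \in B(\bar\sigma)$. Either way $\tau$ lies in $B(\sigma)\cup B(\bar\sigma)$, hence in the jointly explored set. By Proposition~\ref{prop:satIffZero}, such a $\tau$ has $\mathrm{neg}(F,\tau)=0$, so a process reaching it detects satisfiability.

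There is essentially no obstacle in the logic. The only delicate point is interpretive: the corollary must be read as a statement about the explored \emph{sets} containing the balls. It is not a claim that any particular run of the stochastic WalkSAT/MCTS procedure actually visits $\tau$. I would state this interpretation explicitly at the start, since the theorem guarantees existence of $\tau$ in one of the balls, not that either process finds it in bounded time.

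One further remark worth recording is that each ball is reachable by flipping paths of length at most $\lfloor n/2\rfloor$, namely flipping once each coordinate where the start and $\tau$ differ. This justifies calling $\lfloor n/2\rfloor$ a flip budget.
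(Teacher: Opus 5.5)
Your proof is correct and is the intended argument: the paper gives no separate proof of this corollary and treats it as an immediate consequence of Theorem~\ref{thm:coverage}. Your step (the satisfying $\tau$ from the theorem lies in $B(\sigma)\cup B(\bar\sigma)$, which the two processes jointly cover) makes that explicit, and your caveat that this is about the explored sets, not a guarantee that a stochastic run actually visits $\tau$, matches the paper's own ``What the theorem is not'' discussion.
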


\medskip
\noindent In our implementation, thread~1 starts from
$\sigma^{(0)} = \mathbf{1}$ (all-true) and thread~2 from
$\bar\sigma^{(0)} = \mathbf{0}$ (all-false, obtained by calling the
\texttt{flipAll} routine which toggles the current-value bit of every
variable). Both threads solve the same formula $F$; only the starting
assignment differs. We refer to the two threads throughout the
paper as \textbf{All-True} and \textbf{All-False}, respectively,
to avoid overloading the word ``flip'' (which is also the primitive
by which the search moves through the assignment space). The
implementation and its log output use the older names
\texttt{Normal} and \texttt{Flipped} for the same two threads; the
verbatim log excerpts in Section~\ref{sec:results} therefore use
those names.

\paragraph{What the theorem is not.}
Theorem~\ref{thm:coverage} is a geometric fact about the Boolean
hypercube, not an asymptotic complexity claim. The two Hamming balls
of radius $\lfloor n/2 \rfloor$ around $\sigma$ and $\bar\sigma$
together cover $\{0,1\}^n$; their combined size is $2^n$, and running
one search per ball is thus equivalent \emph{in worst-case volume} to
running one search over the full space. Neither thread is exhaustive
within its ball (an exhaustive search of a Hamming ball of radius
$n/2$ has size $\sum_{k=0}^{n/2} \binom{n}{k} \sim 2^{n-1}$, still
exponential), so the coverage does not translate to an algorithmic
speed-up in the worst case. The theorem's practical role is
\textbf{symmetry breaking}: on formulas whose solution set is
concentrated near one polarity (see the SAT~2025 anecdote in
Section~\ref{sec:results}), the ``correct-polarity'' thread starts
much closer to a solution than the other, and the other thread
covers the residual half of the space at no cross-thread cost.

We keep the theorem partly to be explicit about the depth cap
(which the implementation enforces per-thread, so a satisfying
assignment isn't accidentally locked out) and partly because the
symmetry-breaking effect turns out to be practically load-bearing
on the structured and cryptographic instances we test.

\section{Algorithm}
\label{sec:algorithm}

The solver runs two independent MCTS instances (All-True and All-False)
concurrently. Each instance builds a search tree whose nodes represent
partial flip sequences; children correspond to admissible next flips.
Each expansion runs one or more rollouts, each of which extends the
partial assignment by a random walk of directed flips until either SAT
is reached or a rollout-budget is exhausted.

Algorithm~\ref{alg:mcts} sketches the top level;
Algorithm~\ref{alg:rollout} the rollout policy.

\begin{algorithm}[t]
\caption{Two-thread MCTS driver.}
\label{alg:mcts}
\begin{algorithmic}[1]
\Function{Solve}{$F$}
    \State $\sigma_N \gets \mathbf{1};\quad \sigma_F \gets \mathbf{0}$
    \LState{\emph{Preprocess} both starts: for every unit clause $C$ that
           is currently negative, flip its literal's variable so $C$ is
           satisfied. Record the flipped variables per thread as
           $P_N, P_F$ (the \emph{preprocess prefix} of each thread's
           eventual flip set).}
    \State $D(F) \gets \max\bigl(\mathrm{neg}(F,\sigma_N),\, \mathrm{neg}(F,\sigma_F)\bigr)$
           \Comment{shared value denominator, see \S\ref{sec:algorithm}}
    \State \textbf{spawn} $\Call{MCTS}{F, \sigma_N, \text{`All-True'}}$
    \State \textbf{spawn} $\Call{MCTS}{F, \sigma_F, \text{`All-False'}}$
    \LState{\textbf{wait} for either thread to reach $\mathrm{neg}(F,\cdot) = 0$
           OR for both to exhaust \texttt{noExpansions}}
    \If{some thread reached $\mathrm{neg}(F,\cdot) = 0$}
        \LState{let $T$ be that thread's tree-path flips and $R$ the
               \text{bestTrail} returned by the SAT-producing rollout
               ($P$ has already been folded into $\sigma^{(0)}$ by
               \emph{Preprocess} above)}
        \LState{\textbf{return} $\sigma^{(0)} \oplus \bigl( \text{parity-fold of }
               T \sqcup R \bigr)$
               \Comment{apply each var flipped an odd number of times
                        across $T \sqcup R$; equivalently, XOR the
                        multiset onto the (already-preprocessed)
                        starting assignment $\sigma^{(0)}$}}
    \Else
        \LState{let $(\text{minNegSeen}_N, T_{\text{best},N}, R_{\text{best},N})$
               and
               $(\text{minNegSeen}_F, T_{\text{best},F}, R_{\text{best},F})$
               be the \textsc{Unknown} tuples returned by the two
               threads}
        \LState{pick the thread with the smaller $\text{minNegSeen}$;
               call its data $(T_{\text{best}}, R_{\text{best}})$ with
               the matching (already-preprocessed) starting assignment
               $\sigma^{(0)}$}
        \LState{\textbf{return} \textsc{Unknown} together with, for
               logging, $\text{minNegSeen}$ and the partial assignment
               $\sigma^{(0)} \oplus \text{fold}(T_{\text{best}} \sqcup
               R_{\text{best}})$}
    \EndIf
\EndFunction
\Statex
\Function{MCTS}{$F, \sigma^{(0)}, tag$}
    \State $\text{root} \gets$ new node with state $\sigma^{(0)}$
    \LState{$\text{minNegSeen} \gets \mathrm{neg}(F, \sigma^{(0)})$;\;
           $\text{lastImprovementExp} \gets 0$}
    \State $T_{\text{best}} \gets [\,];\; R_{\text{best}} \gets [\,]$
           \Comment{path + trail reaching the min-neg state seen so far}
    \For{$i = 1, 2, \dots, \texttt{noExpansions}$}
        \If{$i - \text{lastImprovementExp} \ge \texttt{restartStallThreshold}$}
            \LState{drop root's subtree; reset the tree
                   (\emph{restart}); reset lastImprovementExp $\gets i$}
        \EndIf
        \State $\text{leaf} \gets $ descend from root via UCT
        \For{each newly-created child $c$ of \text{leaf}}
            \LState{let $T_c$ be the sequence of flips from the root
                   through $c$}
            \LState{\emph{apply the forced-flip unit-propagation cascade
                   to $c$}: while any clause becomes newly-negative and
                   has exactly one variable unlocked in this cascade,
                   flip that variable and append it to $T_c$
                   (this is the first-evaluation UP step referenced in
                   \S\ref{sec:intro})}
            \LState{let $\sigma_c \gets \sigma^{(0)} \oplus \text{fold}(T_c)$
                   be the assignment at child $c$ after the cascade}
            \For{$j = 1, \dots, \texttt{playoutsPerNode}$}
                \State $(v, \text{bestTrail}) \gets \Call{Rollout}{F, \sigma_c}$
                \If{$v = 0$}
                    \LState{\textbf{return} SAT with $T = T_c$,\;
                           $R = \text{bestTrail}$ (\textsc{Solve}
                           composes $\sigma^{(0)} \oplus
                           \text{fold}(T \sqcup R)$; $P$ is already
                           baked into $\sigma^{(0)}$)}
                \EndIf
                \LState{backpropagate $v$ (already clipped to $[0,1]$
                       by \textsc{Rollout})}
                \LState{let $\text{bestNeg}_{c,j} = v \cdot D(F)$
                       (the un-normalised min-neg seen in this rollout)}
                \If{$\text{bestNeg}_{c,j} < \text{minNegSeen}$}
                    \LState{$\text{minNegSeen} \gets \text{bestNeg}_{c,j}$;\;
                           $\text{lastImprovementExp} \gets i$}
                    \LState{$T_{\text{best}} \gets T_c$;\;
                           $R_{\text{best}} \gets \text{bestTrail}$}
                \EndIf
            \EndFor
        \EndFor
    \EndFor
    \LState{\textbf{return} \textsc{Unknown}$(\text{minNegSeen},\,
           T_{\text{best}},\, R_{\text{best}})$
           \Comment{noExpansions exhausted without SAT — hand back the
                    best partial state so \textsc{Solve} can compose the
                    UNKNOWN return value}}
\EndFunction
\end{algorithmic}
\end{algorithm}

\begin{algorithm}[t]
\caption{WalkSAT-style directed rollout with unit propagation.}
\label{alg:rollout}
\begin{algorithmic}[1]
\Function{Rollout}{$F, \sigma$}
    \LState{$\text{flips} \gets 0$;\;
           $\text{budget} \gets \texttt{playoutFlipBudgetMult} \cdot n$}
    \State $\text{trail} \gets [\,]$
           \Comment{ordered list of vars flipped in this rollout}
    \LState{$\text{bestNeg} \gets \mathrm{neg}(F, \sigma)$;\;
           $\text{bestTrail} \gets [\,]$
           \Comment{best (lowest) neg count and the flip prefix reaching it}}
    \While{$\mathrm{neg}(F, \sigma) > 0 \wedge \text{flips} < \text{budget}$}
        \State $C \gets $ uniform-random currently-negative clause of $F$
        \If{$\Call{Rand}{} < \texttt{walksatNoise}$}
            \State $x \gets $ uniform-random variable appearing in $C$
        \Else
            \LState{$x \gets \arg\min_{x \in \mathrm{vars}(C)}
                   \Delta\mathrm{neg}(F, \sigma, x)$
                   \Comment{$\Delta\mathrm{neg}$ = net change if $x$ were flipped}}
        \EndIf
        \LState{flip $x$; append $x$ to \text{trail};
               $\text{flips} \mathrel{+{=}} 1$}
        \LState{propagate: initialise a per-cascade set $B$ of variables
               already flipped in this batch (starting with $\{x\}$);
               while any clause $C'$ becomes newly-negative and has
               exactly one variable $y \notin B$ (an ``unlocked''
               literal — every variable in $\sigma$ is bound at all
               times, but $B$ acts as a per-cascade tabu to prevent
               oscillation), flip $y$, add $y$ to $B$, and append $y$
               to \text{trail} (if \texttt{xorUnitPropagation} is on,
               XOR clauses participate; otherwise only OR clauses).
               Continue until no such $C'$ remains and every clause
               that transitioned during this batch — including the
               original $C$ that motivated flipping $x$ — is either
               non-negative or awaits a later cascade step}
        \If{$\mathrm{neg}(F, \sigma) < \text{bestNeg}$}
            \LState{$\text{bestNeg} \gets \mathrm{neg}(F, \sigma)$;\;
                   $\text{bestTrail} \gets \text{copy of trail}$}
        \EndIf
    \EndWhile
    \State $v \gets \min\!\bigl(1,\, \text{bestNeg} / D(F)\bigr)$
    \State \textbf{return} $(v,\; \text{bestTrail})$
           \Comment{Scalar for UCT backprop, plus the prefix reaching
                    the best state. WalkSAT trajectories drift, so the
                    min-neg point is a better UCT signal than the
                    terminal — and its flip list is what
                    \textsc{Solve} reconstructs the assignment from if
                    \text{bestNeg} $= 0$.}
\EndFunction
\end{algorithmic}
\end{algorithm}

\paragraph{Key tunables.}
\begin{description}\itemsep0pt
\item[\texttt{playoutFlipBudgetMult}] Multiplier on $n$ giving each
      rollout's flip budget. Since the multiplier is a fixed constant
      (defaulting to 100--400), the resulting budget is $\Theta(n)$;
      we do \emph{not} claim to match WalkSAT-family solvers'
      quadratic-scale flip counts on hard 3-SAT. In practice, values
      of $K \in \{100, 400, 4000\}$ suffice on the instance families
      we benchmark; extrapolation to much larger $n$ would need
      $K$ scaled linearly with $n$ (which the CLI supports; the
      complexity classification changes accordingly).
\item[\texttt{restartStallThreshold}] Expansions with no
      improvement to $\text{minNegSeen}$ before we drop the tree and
      seed a fresh root. Small values (30--100) work best on plateau-
      heavy problems (DES); larger (1000--10000) on rapidly-converging
      random 3-SAT.
\item[\texttt{walksatNoise}] Probability of a uniform-random pick vs
      the $\arg\min \Delta\mathrm{neg}$ greedy choice. Default 0.11--0.20.
\item[\texttt{xorUnitPropagation}] Toggles XOR clauses' contribution
      to the forced-flip chain. On XOR-heavy inputs turning it off
      can be useful when cascades over-commit.
\end{description}

\paragraph{Value function.} We use
$v(\sigma) = \mathrm{neg}(F,\sigma) / D(F)$
with a per-formula denominator $D(F)$ chosen to keep the value
gradient meaningful under UCT's exploration bonus. A first attempt
sets $D(F) = \mathrm{neg}(F,\sigma^{(0)})$ per thread — the reachable
dynamic range from that thread's start — which amplifies the
gradient roughly $m / \mathrm{neg}(F,\sigma^{(0)})$-fold compared to
the raw $\mathrm{neg}/m$. This works well when both threads start
similarly badly off, but is unstable when one thread starts near a
solution: for the SAT-2025 All-False-thread anecdote in
Section~\ref{sec:results} (starting neg 80 out of 1163), a
single-clause decrement swings $v$ by
$1/80 \approx 1.25\%$, comparable to the UCT exploration bonus
$c_\text{uct}\sqrt{\ln N/n}$ at modest $N/n$; UCT selection becomes
noisy on that thread.

We therefore set
$D(F) = \max\bigl(\mathrm{neg}(F,\sigma^{(0)}_N),\,
                  \mathrm{neg}(F,\sigma^{(0)}_F)\bigr)$
— the maximum of the two threads' initial neg counts — shared by
both threads. This keeps the amplification (still much better than
$\mathrm{neg}/m$ when both starts are near-worst-case, as in random
3-SAT where both threads see $\sim m/8$ initial negs) while bounding
$D(F)$ below by the more difficult thread's initial neg count,
which prevents the per-thread instability. Both trees then operate
on the same numerical scale, which also makes the two threads'
UCT statistics comparable without rescaling. Concretely, on the
SAT-2025 instance in Section~\ref{sec:results}, an All-False
per-thread denominator of 80 would give a per-clause value swing
of $1/80 \approx 1.25\%$ (comparable to the UCT exploration bonus,
i.e. unstable); the shared $D(F) = 616$ yields $1/616 \approx
0.16\%$, comfortably below the bonus and thus stable.

\paragraph{UCT.} Standard: at each internal node,
$\text{child}^* = \arg\max_c \left( 1 - \bar v_c
   + c_\text{uct} \sqrt{\ln N_\text{parent} / N_c} \right)$
with $c_\text{uct} = 0.5$ (calibrated for the amplified value scale;
the classical $\sqrt 2$ overwhelms the per-clause gradient at large
$m$).

\paragraph{Preprocessing.} Before starting the tree, every unit
clause that is currently negative has its literal's variable flipped;
the flipped variables are recorded so the final solution report
composes preprocess flips with search flips. On the DES-key-recovery
CNF this satisfies 128 unit clauses (fixing the plaintext and
ciphertext bits) before search begins, dramatically shrinking the
effective search space.

\paragraph{$\Delta\mathrm{neg}$ complexity for XOR clauses.} The
min-conflicts pick inside a rollout iterates each candidate
variable's clause list to compute $\Delta\mathrm{neg}$. For an OR
clause, the per-clause $\Delta$ depends on whether the flipped
literal was the ``last surviving true'' or ``first among all-false''
(the two boundary cases; every other flip in an OR clause is a no-op
for negativity). Both cases resolve in $O(1)$ from the maintained
\emph{negatives} counter. For an XOR clause, any single flip toggles
the parity, so the $\Delta$ is $\pm 1$ read directly off
$\mathrm{isNegative}(\cdot)$ — also $O(1)$. Hence
$\Delta\mathrm{neg}(F, \sigma, x) = O(\deg(x))$ where $\deg(x)$ is
the number of clauses $x$ appears in, regardless of clause type;
no per-XOR overhead beyond the OR case. In practice $\deg(x)$ is
small ($\sim 3$ for uniform random 3-SAT, $\sim 15$ for the DES
CNF), and the rollout inner loop touches
$|\mathrm{vars}(C)| \cdot \deg = O(k \cdot \deg)$ clauses per flip
— trivially bounded.

\section{Empirical results}
\label{sec:results}

We ran the solver on five instance families plus two individual
anecdotes. All runs used the same MCTS configuration:
\texttt{treeSize} $=5\cdot 10^7$, \texttt{noExpansions} $=10^8$,
\texttt{playoutsPerNode} $= 1$,
\texttt{playoutFlipBudgetMult} $= 400$,
\texttt{restartStallThreshold} $= 100$,
\texttt{xorUnitPropagation} $= \text{true}$,
\texttt{walksatNoise} $= 0.11$. Hardware: single machine, 16 GB JVM
heap. Each instance ran on both threads (All-True + All-False); the
first to reach SAT wins and its wall time is recorded.

\noindent The evaluation is a first-pass characterisation, not a
competitive benchmark against modern CDCL or SLS solvers; see
Section~\ref{sec:conclusions} for the honest limitations.

Table~\ref{tab:results} summarises.

\begin{table}[t]
\centering
\begin{tabular}{lrrrrrrrr}
\toprule
Family & $n$ & $m$ & \# & Solved
   & Median (s) & Playouts & UP-alone & All-False wins \\
\midrule
\texttt{flat200-479}    & 600  & 2237  & 100 & 100 & 0.31   & 0--27 & 71\% & 37 \\
\texttt{sw100-lp0}      & 500  & 3100  & 100 & 100 & 0.04   & 0--0  & 100\% & 99 \\
\texttt{sw100-lp4}      & 500  & 3100  & 100 & 100 & 0.08   & 0--0  & 100\% & 70 \\
\texttt{xor200} (planted, $\alpha{=}0.9$)
                        & 200  & 180   &  20 &  20 & 9.91   & 0--15 & 20\%  & 10 \\
\texttt{xor300} (planted, $\alpha{=}0.9$)
                        & 300  & 270   &  20 &  20 & 90.16  & 3--48 & 0\%   & 10 \\
\bottomrule
\end{tabular}
\caption{Per-family summary. Median wall time and \emph{Playouts}
range are over the SAT-status rows only. \emph{UP-alone} is the
fraction of instances the winning thread solved in \textbf{0
playouts} — i.e., unit-propagation from the initial (post-preprocess)
state reached SAT without any MCTS rollout ever backpropagating a
value. \emph{All-False wins} counts how often the all-false-start
thread reported SAT first. Rows with UP-alone $= 100\%$ (both
\texttt{sw100} sets) tell you the MCTS engine did no work on those
families — the polarity split plus unit propagation solve them
outright. Rows with lower UP-alone \% (\texttt{xor200},
\texttt{xor300}) are the ones where the MCTS engine actually
drove the search.}
\label{tab:results}
\end{table}

\subsection{Structured graph colouring: \texttt{flat200-479}}

Random-3-SAT encodings of 3-colouring on 200-vertex random flat
graphs, from the SATLIB benchmark suite~\cite{hoos:satlib}. All 100
solved in a median of 0.31~s, average 0.47~s.
Playouts range 0--27: on many instances the winning thread finds a
solution during the FIRST rollout, before UCT has anything to
backpropagate. All-True and All-False wins split roughly 63/37;
graph-colouring encodings have no strong polarity bias so both
starts have comparable initial negative-clause counts.

\subsection{Small-world graph colouring: \texttt{sw100}}

500 vertices, small-world topology at two rewiring levels (denser
structure at \texttt{lp0}, most random at \texttt{lp4}); also from
SATLIB~\cite{hoos:satlib}. Both sets solved 100/100 with median times
40--80 ms. Every instance solves in
0 playouts: unit-propagation + preprocessing after the All-False
start suffices. The All-False thread wins 99/100 at \texttt{lp0} and
70/100 at \texttt{lp4} — the more structured the input, the more
one polarity dominates.

\subsection{Planted 3-XOR-SAT: \texttt{xor200} and \texttt{xor300}}

Planted random 3-XOR-SAT at ratio $\alpha = 0.9$, just below the
theoretical phase transition of $\alpha_c \approx
0.918$~\cite{ricci:xorsat}. Instances were generated by our own
script: for each of $m = \alpha n$ clauses, three variables are drawn
uniformly at random and signed so that the XOR under a single hidden
assignment $A$ evaluates to true, guaranteeing satisfiability of the
resulting CNF. 20/20 solved at $n=200$ (median 9.91~s, playouts
0--15), and 20/20 solved at $n=300$ (median 90.16~s, mean 430~s,
playouts 3--48, range 1.80--3102~s). At $n=300$ the winner splits
evenly between All-True and All-False (10/10): the planted assignment
$A$ is drawn uniformly, so the two threads are on average equidistant
from a solution and neither polarity systematically dominates.

The $n=300$ set was closed using a more aggressive preset than the
defaults used for the graph-colouring families: \texttt{playoutFlip%
BudgetMult} $\in \{2000, 4000\}$, \texttt{restartStallThreshold}
$\in \{10, 25\}$, \texttt{walksatNoise} $\in \{0.27, 0.29\}$. The
longer per-rollout flip budget accommodates the $\Theta(n^2)$ flip
counts characteristic of XOR-heavy WalkSAT trajectories, and the
tighter restart threshold prunes subtrees whose UCT values plateau
before they contribute a new \text{minNegSeen} low. No instance
solved in 0 playouts — unit propagation from either polarity alone
is insufficient at this scale, and UCT does material search work
(mean 24 playouts per solve).

\subsection{SAT Competition 2025:
\texttt{Break\_triple\_04\_06.xml.cnf}}

One instance from the SAT Competition 2025 main
track~\cite{satcomp:2025}, encoded as CNF with 252 variables and 1163
clauses. Startup log:
\begin{verbatim}
All-True  vars: 252 clauses: 1163 neg clauses: 616 -> 616
    (preprocess flipped 1 vars)
All-False vars: 252 clauses: 1163 neg clauses: 80  -> 80
    (preprocess flipped 0 vars)
>>> All-False formula reached SAT after 0 playouts in 0.05 s <<<
\end{verbatim}
The All-True thread starts with 616 of 1163 clauses unsatisfied
($\approx 53\%$); the All-False thread starts with only 80 ($\approx
7\%$). The all-false starting assignment lands within a very short
flip sequence of a satisfying assignment, and unit propagation alone
carries it to SAT in 0 playouts. This is precisely the symmetry-
breaking payoff Corollary~\ref{cor:twoThread} anticipates: without
the second start, the all-true thread would have had to bridge a
Hamming distance $\ge n/2$; with the polarity split, the all-false
distance is trivially short and the whole instance falls in 50~ms.

\subsection{Full 16-round DES key recovery}

The DES-key-recovery CNF ($n=1976$, $m=30\,072$, obtained by
translating the DES circuit for a random known plaintext / ciphertext
pair and leaving the 64 key bits free, following the standard
logical-cryptanalysis reduction of~\cite{massacci:descrypto}) is
designed to resist local search. Our encoding is smaller than the
original CNFs reported in~\cite{massacci:descrypto} ($n=1976$ vs
$\sim 10\,000$; $m=30\,072$ vs $\sim 60\,000$). We ran with a more aggressive configuration:
\texttt{playoutFlipBudgetMult} $= 4048$,
\texttt{restartStallThreshold} $= 1$,
\texttt{playoutsPerNode} $= 10$. Excerpt from a $\sim$7-hour trace:

\begin{verbatim}
[All-True]  restart #54 at 25265.49 s (min_neg_seen = 30)
[All-False] restart #35 at 25563.32 s (min_neg_seen = 28)
[All-False] min_neg_seen = 27 (pure_pos = 1868) at 25749.90 s
[All-False] min_neg_seen = 26 (pure_pos = 1865) at 25749.99 s
[All-False] min_neg_seen = 25 (pure_pos = 1865) at 25749.99 s
[All-False] min_neg_seen = 24 (pure_pos = 1852) at 25750.87 s
\end{verbatim}
Starting neg-clause count is $\sim 200$ (after preprocess fixes the
128 plaintext/ciphertext unit clauses); after roughly 7~hours the
search reaches $\text{neg} = 24$, i.e.\ $99.92\%$ of clauses
satisfied. Progress is bursty — three consecutive lows in under a
second, then a plateau. This is characteristic of local search
encountering S-box constraints, which act as opaque non-linearities:
the linear key-schedule and permutation layers propagate cleanly
through unit propagation and min-conflicts, but S-boxes have no
useful $\Delta\mathrm{neg}$ gradient. Full 16-round DES key recovery
is not solved (nor was it expected to be — no local-search solver
does), but the algorithm's trajectory characterises exactly where
the ceiling sits.

\section{Conclusions and novelty}
\label{sec:conclusions}

We combine two ideas that (to our knowledge) haven't been paired
before in a SAT/XSAT setting:

\begin{enumerate}\itemsep0pt

\item \textbf{Two-thread symmetry-breaking with a bounded initial-
distance guarantee.} Theorem~\ref{thm:coverage} turns the trivial
observation ``the assignment space is bounded by $n$'' into an
explicit initial-Hamming-distance bound of $\lfloor n/2 \rfloor$
per thread. Existing local-search solvers rely on random restarts
alone for diversification, which lack this structural symmetry-
breaking. On XOR-heavy or structured inputs
(Section~\ref{sec:results}, SAT-2025 anecdote), one of the two
polarities can start dramatically closer to SAT than the other, and
the ``loser'' thread's compute is not wasted — it simply covers the
half of the space the winner cannot reach.

\item \textbf{XOR clauses first-class in local search.} We handle XOR
via a polymorphic negativity predicate
(Equation~\ref{eq:xorNeg}) and a polymorphic flip-delta prediction
that generalises the WalkSAT
min-conflicts heuristic; both OR and XOR clauses participate in the
forced-chain unit propagation. Standard SAT solvers eliminate XOR
via Tseitin, losing the algebraic structure; we retain it and use it.

\item \textbf{Unit propagation inside rollouts.} WalkSAT variants
(Novelty~\cite{selman:walksat}, adaptNovelty~\cite{hoos:novelty})
don't propagate mid-rollout. We do; the propagation cascade lets a
single rollout step turn into a chain of implied flips whenever a
newly-negative clause has exactly one variable still unlocked in the
current cascade. On structured OR inputs (\texttt{flat200},
\texttt{sw100}) this is largely responsible for the median 0-playout
solves — the ``search'' is really unit propagation from a well-chosen
start.

\item \textbf{MCTS as WalkSAT orchestrator.} The tree drives
diversification (via UCT + stall-triggered restarts); rollouts
drive intensification. This inverts the usual MCTS
game-playing use, where rollouts are cheap random walks and the
tree is the intelligence; here rollouts are the intelligence (with
WalkSAT bias, propagation, and a per-rollout flip budget of
$K \cdot n$ for a tunable constant $K$) and the tree steers among
rollout seeds.

\end{enumerate}

\paragraph{Limitations.} Several honest ones, in decreasing order
of impact on the reader's ability to trust the numbers.

\begin{itemize}\itemsep0pt

\item \textbf{No competitive baselines.} We report
``solves-within-budget'' wall times but do not benchmark against a
CDCL solver (Kissat, CaDiCaL) or a modern SLS solver (probSAT,
YalSAT). Without those, absolute times don't establish state-of-the
-art, only feasibility. Adding these runs is the single most
important next step and is prioritised in future work below.

\item \textbf{Modest sample sizes.} \texttt{xor200} and \texttt{xor300}
have 20 instances each. The medians in Table~\ref{tab:results} are
directional, not distribution-tight, and the $n{=}300$ times span
three orders of magnitude (1.8~s--3102~s), so single-instance
comparisons should be treated with caution.

\item \textbf{MCTS variance dilutes UCT on deep, narrow searches.}
Random rollouts with WalkSAT noise are high-variance; on deep
subtrees (DES-scale) UCT's per-child averages take many samples to
stabilise. The current
\texttt{playoutsPerNode}$= 1$ setting deliberately spreads
compute across many tree nodes rather than smoothing per-node
value estimates, so single lucky/unlucky rollouts can misdirect the
tree for extended windows. Increasing \texttt{playoutsPerNode}
alleviates the variance at the cost of tree breadth.

\item \textbf{No CDCL-style conflict analysis, learnt clauses,
distributed or GPU parallelism.} Single machine, two threads only.
The DES ceiling ($\text{neg} = 24$ out of $30\,072$) is the
algorithm's practical limit on cryptographic non-linearity —
closing that gap would require algebraic techniques (Gaussian
elimination on the XOR substructure, or S-box specific reasoning)
that are outside the scope of pure local search.

\end{itemize}

\paragraph{Future work.} In roughly the order we plan to tackle:
(i)~\textbf{baseline benchmarking} against Kissat / CaDiCaL / probSAT
/ YalSAT on the SAT Competition 2025 main and random tracks
(100+ instances per track), reporting head-to-head runtime and
flip-count comparisons — this is a prerequisite for any real
performance claim;
(ii)~distributed multi-polarity — more than two threads, each seeded
from a different start (not just complementary), with cross-thread
sharing of \texttt{minNegSeen} to prune redundant subtrees;
(iii)~a learned rollout policy replacing the fixed WalkSAT bias,
trained on solve traces from the benchmark suite;
(iv)~pre-solve Gaussian elimination on the XOR sublattice for
XOR-heavy inputs like the crypto benchmarks.

\end{document}